\newtheorem*{Theorem}{Theorem}
\newtheorem{Claim}{Claim}
\theoremstyle{remark}
\begin{document}

\title{Cosmetic surgeries and non-orientable surfaces}

\author{Kazuhiro Ichihara}
\address{Department of Mathematics, 
College of Humanities and Sciences, Nihon University,
3-25-40 Sakurajosui, Setagaya-ku, Tokyo 156-8550, Japan}
\email{ichihara@math.chs.nihon-u.ac.jp}
\thanks{The first author is partially supported by
Grant-in-Aid for Young Scientists (B), No.~23740061, 
Ministry of Education, Culture, Sports, Science and Technology, Japan.}

\begin{abstract}
By considering non-orientable surfaces in the surgered manifolds, 
we show that the $10/3$- and $-10/3$-Dehn surgeries 
on the 2-bridge knot $9_{27} = S(49,19)$ are not cosmetic, 
i.e., they give mutually non-homeomorphic manifolds. 
The knot is unknown to have no cosmetic surgeries 
by previously known results; in particular, 
by using the Casson invariant and the Heegaard Floer homology. 
\end{abstract}

\keywords{cosmetic surgery, non-orientable surface, 2-bridge knot}

\subjclass[2000]{Primary 57M50; Secondary 57M25}

\date{\today}

\maketitle

\section{Introduction}

The well-known Knot Complement Conjecture says that: 
If two knots in the 3-sphere $S^3$ have homeomorphic complements, 
then they are equivalent, 
i.e., there exists a homeomorphism $h : S^3 \to S^3$ 
which takes one knot to the other. 
It had been conjectured by Tietze in 1908 \cite{Tietze}, and 
was proved by Gordon and Luecke 
in their cerebrated paper \cite{GordonLuecke} in 1989. 
Actually Gordon and Luecke showed that; 
On a nontrivial knot in $S^3$,
nontrivial \textit{Dehn surgery} never yields $S^3$; 
to which the Knot Complement Conjecture is an immediate corollary. 

The Knot Complement Conjecture can be generalized as follows. 

\medskip

\noindent
\textbf{Oriented Knot Complement Conjecture} 
(Bleiler (Kirby's list Problem 1.81(D) \cite{Kirby})) : 
If $K_1$ and $K_2$ are knots in a closed, oriented 
3-manifold $M$ whose complements are
homeomorphic via an orientation-preserving 
homeomorphism, then there exists 
an orientation-preserving homeomorphism of $M$ 
taking $K_1$ to $K_2$.

\medskip

This conjecture is equivalent to the following in terms of Dehn surgery. 

\medskip

\noindent
\textbf{Cosmetic Surgery Conjecture} 
(Bleiler (Kirby's list Problem 1.81(A) \cite{Kirby})): 
Two surgeries on inequivalent slopes are never purely \textit{cosmetic}. 

\medskip

Here we say that: 
two slopes are \textit{equivalent} 
if there exists a homeomorphism of the exterior $E(K)$ of a knot $K$ 
taking one slope to the other, and 
two surgeries on $K$ along slopes $r_1$ and $r_2$ 
are \textit{purely cosmetic} 
if there is an orientation preserving 
homeomorphism between $K(r_1)$ and $K(r_2)$, 
and \textit{chirally cosmetic} 
if the homeomorphism is orientation reversing.

Toward Cosmetic Surgery Conjecture by topological approach, 
as a first step, we show the following in this paper: 

\begin{Theorem}
Let $K$ be the knot in $S^3$ indicated as $9_{27}$ in the Rolfsen's knot table, 
which is a two-bridge knot with the Schubert form $S(49,19)$. 
Then $K (10/3)$ is not homeomorphic to $K(-10/3)$. 
\end{Theorem}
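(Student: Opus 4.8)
The plan is to read off from $K(10/3)$ and $K(-10/3)$ a homeomorphism invariant that is built from non-orientable surfaces, is insensitive to orientation, and separates the two manifolds. Both are $\mathbf{Z}/10$-homology spheres, so $H_2(M;\mathbf{Z}/2)\cong\mathbf{Z}/2$, and — a closed orientable surface in a rational homology sphere being $\mathbf{Z}/2$-nullhomologous — the unique non-zero class is carried by a connected closed \emph{non}-orientable surface. Let $\gamma(M)$ be the minimum of $2-\chi(F)$ over all such $F$, a non-orientable analogue of the Thurston norm of that class, which depends only on the homeomorphism type of $M$, not on an orientation; the target is $\gamma(K(10/3))\neq\gamma(K(-10/3))$. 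It is convenient to move the asymmetry onto the knot: mirroring $S^3$ identifies $K(-10/3)$, after an orientation reversal, with the $10/3$-surgery on the mirror knot $\overline{K}=S(49,30)$, so since $\gamma$ is orientation-insensitive it suffices to prove $\gamma(K(10/3))\neq\gamma(\overline{K}(10/3))$ — the \emph{same} surgery on $9_{27}$ and on its mirror, any difference being forced by the chirality of $9_{27}$ (which is genuine, as $19^2\equiv 18\not\equiv-1\pmod{49}$).

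For the upper bound I would exploit the $2$-bridge structure directly. Write $M=E(K)\cup V$ with $V$ the filling solid torus and $K^*$ its core, a generator of $H_1(M)\cong\mathbf{Z}/10$. An even continued fraction expansion associated with $9_{27}$ (respectively its mirror) produces an explicit non-orientable surface $S_0\subset S^3$ spanning $K$ — a linear plumbing of twisted bands, with first Betti number the length of the expansion — carrying an even integral boundary slope. Among the curves on $\partial V$ running twice around $K^*$, exactly one has an integral slope (a short computation gives $+4$ for the $10/3$-filling and $-4$ for the $-10/3$-filling), and it bounds a Möbius band in $V$; when the boundary slope of $S_0$ matches it, $S_0$ together with this Möbius band is a closed non-orientable surface in $M$, and otherwise one first replaces $S_0$ by several parallel copies capped off along meridian disks of $V$. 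This gives an explicit bound $\gamma(K(10/3))\le N_+$, and the mirror computation gives $\gamma(\overline{K}(10/3))\le N_-$.

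The matching lower bound is the heart of the argument. Given a non-orientable $F\subset M$ in the non-zero $\mathbf{Z}/2$-class, $F\cdot K^*$ is odd, so $F$ genuinely meets $V$; after maximally compressing $F$ in $M$, $F\cap V$ in $V$, and $F\cap E(K)$ in $E(K)$ — a compression preserves the $\mathbf{Z}/2$-class and keeps $F$ non-orientable, since an orientable result would be $\mathbf{Z}/2$-nullhomologous — one may assume $F\cap V$ consists of meridian disks, $\partial$-parallel annuli, and Möbius bands, while $F':=F\cap E(K)$ is incompressible and, after isotopy, $\partial$-incompressible or $\partial$-parallel in $E(K)$. By Hatcher--Thurston $E(9_{27})$ is small, and its incompressible $\partial$-incompressible surfaces together with their boundary slopes are listed explicitly — all boundary slopes being even integers, and this finite set \emph{not} being invariant under $\gamma\mapsto-\gamma$ precisely because $9_{27}$ is chiral. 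Feeding this classification, together with the (coefficient-dependent) list of slopes of curves bounding disks or Möbius bands in $V$, into an Euler-characteristic count across the gluing should bound $2-\chi(F)$ below by a quantity that comes out differently for $+10/3$ and $-10/3$.

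I expect this last step to be the main obstacle: $\gamma(M)$ need not be realized by an essential surface, so the innermost-disk/outermost-arc reduction of $F$ to the classified pieces must be carried out with care, keeping track that $\chi$ only increases under compression. If this does not give a sharp enough bound, a fallback is to pass to the double cover $\widetilde{M}\to M$ dual to the non-zero class in $H^1(M;\mathbf{Z}/2)$ — a Dehn filling of the $2$-fold cyclic cover of $E(9_{27})$, i.e.\ a surgery on the lift of $9_{27}$ in the lens space $L(49,19)$ — lift $F$ to its orientation double cover $\widetilde{F}\subset\widetilde{M}$, and estimate the ordinary Thurston norm of $[\widetilde{F}]$ there via twisted Alexander polynomials of $9_{27}$, where once more the chirality of $9_{27}$ breaks the $\pm$ symmetry. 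Combining the upper and lower bounds yields $\gamma(K(10/3))\neq\gamma(K(-10/3))$, and hence $K(10/3)$ and $K(-10/3)$ are not homeomorphic.
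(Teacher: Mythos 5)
Your strategy is exactly the one the paper uses: the distinguishing invariant is the minimal genus (equivalently minimal $2-\chi$) of a closed embedded non-orientable surface, which is orientation-insensitive; the upper bound comes from a Hatcher--Thurston non-orientable spanning surface with even integral boundary slope capped off by a M\"{o}bius band in the filling solid torus; and the lower bound comes from Przytycki-style normalization of $F\cap V$ and $F\cap E(K)$ together with the (computer-assisted) classification of essential non-orientable surfaces in the $2$-bridge exterior, the asymmetry of whose boundary-slope set under $\gamma\mapsto-\gamma$ is what breaks the $\pm10/3$ symmetry. Your preliminary observations (every closed non-orientable surface in these $\mathbf{Z}/10$-homology spheres carries the non-zero $\mathbf{Z}/2$-class; the unique integral slope at distance $2$ from $\pm10/3$ is $\pm4$; compression does not increase $2-\chi$ and preserves non-orientability of some component) are all correct and are used, implicitly or explicitly, in the paper.

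The gap is that the decisive step is only asserted to ``should'' work: you never produce the numbers $N_\pm$ nor the lower bounds, so the inequality $\gamma(K(10/3))\neq\gamma(K(-10/3))$ is not actually established, and without the explicit surface data for $9_{27}$ there is no reason a priori that the bounds separate. The paper closes this as follows. Dunfield's implementation of Hatcher--Thurston gives a genus-$4$ non-orientable spanning surface with boundary slope $-4$; since $\Delta(-4,-10/3)=2$, tubing on a M\"{o}bius band in $V$ yields a closed non-orientable surface of genus $5$ in $K(-10/3)$. For $K(10/3)$, the program shows there are exactly $8$ essential non-orientable spanning surfaces, all of genus at least $4$: the genus-$5$ ones have boundary slopes $-2,2,6,10$ (none equal to $10/3$, which would be forced if $F\cap V$ were a meridian disk), and the genus-$4$ ones have boundary slopes $-8,-4,0$ (none at distance $2$ from $10/3$, which would be forced if $F\cap V$ were a M\"{o}bius band). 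Hence $K(10/3)$ contains no closed non-orientable surface of genus at most $5$, and the two manifolds are distinguished. Your proposal needs this concrete verification (or the twisted-Alexander fallback carried out in full) before it constitutes a proof; also note that in your case analysis you should rule out $F\cap V$ containing $\partial$-parallel annuli and multiple disk/M\"{o}bius components, which is where the incompressibility of $F\cap V$ in $V$ from Przytycki's normalization is doing real work.
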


We remark that the slopes corresponding to $10/3$ and $-10/3$ 
are inequivalent. 
Because, if they were, there must be 
an orientation reversing homeomorphism on $E(K)$ 
by \cite[Lemma 2]{BleilerHodgsonWeeks}, 
but it is impossible since the knot is not amphicheiral. 

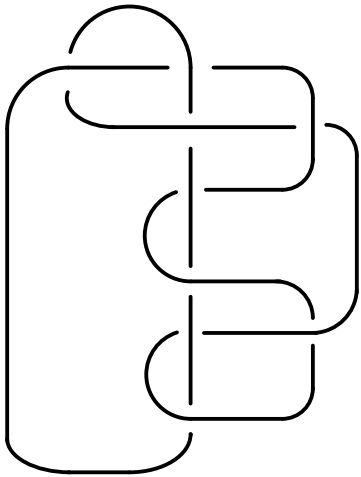
\begin{figure}[hbt]
\unitlength 0.1in
\begin{picture}( 18.3000, 24.4000)( 46.0000,-34.4000)
%
\special{pn 20}%
\special{pa 5560 1320}%
\special{pa 5560 1552}%
\special{fp}%
%
\special{pn 20}%
\special{ar 5240 1320 320 320  3.3989164 6.2831853}%
%
\special{pn 20}%
\special{pa 5440 1320}%
\special{pa 4920 1320}%
\special{fp}%
%
\special{pn 20}%
\special{ar 4920 1640 320 320  3.1415927 4.7123890}%
%
\special{pn 20}%
\special{pa 4600 1640}%
\special{pa 4600 3276}%
\special{fp}%
%
\special{pn 20}%
\special{ar 5160 1480 248 152  1.5707963 3.3482759}%
%
\special{pn 20}%
\special{pa 5144 1632}%
\special{pa 6104 1632}%
\special{fp}%
%
\special{pn 20}%
\special{pa 5680 1320}%
\special{pa 6040 1320}%
\special{fp}%
%
\special{pn 20}%
\special{ar 6040 1480 160 160  4.7123890 6.2831853}%
%
\special{pn 20}%
\special{ar 6040 1800 160 160  6.2831853 6.2831853}%
\special{ar 6040 1800 160 160  0.0000000 1.5707963}%
%
\special{pn 20}%
\special{pa 6200 1800}%
\special{pa 6200 1480}%
\special{fp}%
%
\special{pn 20}%
\special{pa 5560 1744}%
\special{pa 5560 2360}%
\special{fp}%
%
\special{pn 20}%
\special{pa 6040 1960}%
\special{pa 5640 1960}%
\special{fp}%
%
\special{pn 20}%
\special{ar 6270 1780 160 160  4.7123890 6.2831853}%
%
\special{pn 20}%
\special{ar 5560 2200 240 240  1.5707963 4.3906384}%
%
\special{pn 20}%
\special{pa 5560 2440}%
\special{pa 6030 2440}%
\special{fp}%
%
\special{pn 20}%
\special{pa 5560 2520}%
\special{pa 5560 3080}%
\special{fp}%
%
\special{pn 20}%
\special{pa 6430 1780}%
\special{pa 6430 2498}%
\special{fp}%
%
\special{pn 20}%
\special{ar 6196 2476 234 234  0.0471349 1.5137155}%
%
\special{pn 20}%
\special{pa 6220 2710}%
\special{pa 5630 2710}%
\special{fp}%
%
\special{pn 20}%
\special{ar 6008 2632 192 192  4.7123890 6.2831853}%
%
\special{pn 20}%
\special{ar 5560 2928 232 232  1.5707963 4.4003469}%
%
\special{pn 20}%
\special{pa 5560 3160}%
\special{pa 6040 3160}%
\special{fp}%
%
\special{pn 20}%
\special{ar 6040 3000 160 160  6.2831853 6.2831853}%
\special{ar 6040 3000 160 160  0.0000000 1.5707963}%
%
\special{pn 20}%
\special{pa 6200 3000}%
\special{pa 6200 2776}%
\special{fp}%
%
\special{pn 20}%
\special{ar 5240 3240 320 200  6.2831853 6.2831853}%
\special{ar 5240 3240 320 200  0.0000000 1.5707963}%
%
\special{pn 20}%
\special{pa 5240 3440}%
\special{pa 4920 3440}%
\special{fp}%
%
\special{pn 20}%
\special{ar 4928 3264 328 176  1.5707963 3.1415927}%
\end{picture}%
\caption{The knot $9_{27}$}
\end{figure}

This is very specific calculations for just one example, 
but previously known results about cosmetic surgery 
cannot distinguish the pair of manifolds. 
Also, as far as the author knows, 
there are no approaches toward Cosmetic Surgery Conjecture 
by considering non-orientable surfaces, and so, 
our arguments could give a something new viewpoint. 

\medskip

We here explain our theorem above is in fact contained 
in the complement of the recent known results, 
mainly based on \textit{Heegaard Floer technology}, 
developed and mainly studied by P. Ozsv\'{a}th and Z. Szab\'{o}. 

Originally such an approach had started in \cite{OzsvathSzabo11}. 
After that, together with other invariants of 3-manifolds, 
Wang showed in \cite{Wang} that 
no genus one knot in $S^3$ admits purely cosmetic surgeries. 

Moreover, in \cite{Wu}, Wu proved that 
for two distinct rational numbers $r$ and $r'$ with $r r' > 0$ 
and a non-trivial knot $K$ in $S^3$, 
$K(r)$ is not orientation preservingly homeomorphic to $K(r')$. 

The key ingredient of Wu's proof is using 
the Casson invariant of 3-manifolds introduced by A. Casson. 
Actually, Boyer and Lines in \cite{BoyerLines} 
had previously proved by using the Casson invariant 
that a knot $K$ in $S^3$ satisfying $\Delta''_K (1) \ne 0$ has no cosmetic surgeries. 
Here $\Delta_K (t)$ denotes the Alexander polynomial for $K$. 

Recently Ni and Wu obtained the following excellent result in \cite{NiWu}; 
Suppose $K$ is a nontrivial knot in $S^3$, 
$r_1, r_2 \in \mathbb{Q} \cup \{ 0/1 \}$ are two distinct slopes 
such that $K(r_1) \cong K (r_2)$ as oriented manifolds. 
Then $r_1, r_2$ satisfy that
(a) $r_1 = - r_2$, 
(b) $q^2 \equiv -1 \mod p$ for $r_1 = p/q$, 
(c) $\tau (K) = 0$, 
where $\tau$ is the invariant defined by Ozsv\'{a}th-Szab\'{o} defined in \cite{OzsvathSzabo03}. 

On the other hand, 
$K=9_{27}$ is a slice knot, and so $|\tau(K)| \le g_4(K) =0$ 
by \cite[Corollary 1.3]{OzsvathSzabo03}. 
Actually $K$ is a 2-bridge knot, and so, is an alternating knot, 
for which $\tau(K) = - \sigma(K) / 2 $ holds, 
where $\sigma(K)$ denotes the knot signature, 
as shown in \cite[Theorem 1.4]{OzsvathSzabo03}. 
Obviously we see that $q^2 = 9 \equiv -1	\mod p=10$.

Moreover the Alexander polynomial for $K=9_{27}$ is
$\Delta_K (t) = -t^3 + 5 t^2 -11t + 15 -11t^{-1} + 5 t^{-2} - t^{-3}$, 
which implies $ \Delta''_K (1) = 0$.

\section{Known facts} 

We here summarize part of known results on cosmetic surgery conjecture. 
See \cite{BleilerHodgsonWeeks} in detail. 

We first remark that 
the Cosmetic surgery conjecture for ``chirally cosmetic'' case is not true: 
there exists counter-example given by Mathieu \cite{Mathieu90, Mathieu92}. 
Actually, for example, 
$(18k+9)/(3k+1)$- and $(18k+9)/(3k+2)$-surgeries 
on the right-hand trefoil knot $T_{2,3}$ in $S^3$ yield 
orientation-reversingly homeomorphic pairs for any non-negative integer $k$, 
i.e., the right-hand trefoil admits a chorally cosmetic surgery pairs 
along inequivalent slopes. 

After the discovery of chirally cosmetic surgery on the trefoil by Mathieu, 
Rong gave in \cite{Rong} a classification of 
Seifert knots in closed 3-manifolds (except lens spaces) 
admitting cosmetic surgeries. 
Furthermore, Matignon \cite{Matignon} gave 
a complete classification of 
non-hyperbolic knots in lens spaces admitting cosmetic surgeries. 
We remark that the cosmetic surgeries on such knots are all chirally cosmetic. 

Concerning hyperbolic knots, 
Bleiler, Hodgson and Weeks found such an example in \cite{BleilerHodgsonWeeks}]: 
They showed that there exists a hyperbolic knot which admits 
a pair of surgeries along inequivalent slopes 
yielding oppositely oriented lens spaces; 
$L(49,-19) \leftrightarrow L(49,-18)$ (mirror images). 
It was then announced in \cite{Matignon} by Matignon (preprint) 
that there are infinite family extended above.

On the other hand, the following two theorem seems to show that 
to find cosmetic surgeries are quit hard thing: 

Lackenby \cite{Lackenby}: 
Let $K$ be a homotopically trivial knot 
with irreducible, atoroidal exterior 
in 3-manifold with $\beta_1 >0$. 
Suppose that at least one of the slopes $r$, $r'$ has 
a sufficiently high distance with the meridian. 
Then $K(r)$ and $K(r')$ are orientation 
preserving homeomorphic if and only if $r = r'$,	
and	are orientation reversing homeomorphic 
if and only if $K$ is amphicheiral and $r =-r'$.

Bleiler-Hodgson-Weeks \cite{BleilerHodgsonWeeks}: 
For a hyperbolic knot $K$, 
there exists a finite set of slopes $E$ 
such that if $r$, $r'$ are distinct slopes outside $E$,
$K(r)$ and $K(r')$ homeomorphic implies that 
there exists an orientation reversing isometry $h$ such that $h(r) = r'$. 
In particular, if $K$ is a knot in $S^3$, then $K$ is amphicheiral and $r = -r'$.

\section{Proof}

We begin with recalling basic definitions and terminology about Dehn surgery. 
See \cite{Rolfsen} in details for example. 

A Dehn surgery is the following operation 
for a given knot $K$ (i.e., an embedded circle) in a 3-manifold $M$. 
First to take the exterior $E(K)$ of $K$ 
(i.e., the complement of an open tubular neighborhood of $K$ in $M$), 
and then, glue a solid torus $V$ to $E(K)$. 
Let $\gamma$ be the slope 
(i.e., an isotopy class of non-trivial unoriented simple loop) 
on the peripheral torus of $K$ in $M$ 
which is represented by the curve 
identified with the meridian of the attached solid torus via the surgery. 
Then, by $K(\gamma)$, we denote 
the manifold which obtained by the Dehn surgery on $K$, 
and call it the 3-manifold obtained by Dehn surgery on $K$ along $\gamma$. 
In particular, the Dehn surgery on $K$ along the meridional slope 
is called a \textit{trivial} Dehn surgery. 

When $K$ is a knot in $S^3$, 
by using the standard meridian-longitude system, 
slopes on the peripheral torus are parametrized by 
rational numbers with $1/0$. 
Thus, when a slope $\gamma$ corresponds to a rational number $r$, 
we use $K(r)$ in stead of $K(\gamma)$.

\bigskip

Now let us start to prove our theorem. 
Let $K$ be the knot in $S^3$ labeled as $9_{27}$ in Rolfsen's knot table. 
We then consider the two surgered manifold $K(-10/3)$ and $K(10/3)$, 
and show that they are not homeomorphic. 

The first key claim is the following. 

\begin{Claim}
If $p$ is even, then the surgered manifold $K(p/q)$ 
contains a closed non-orientable surface. \qed
\end{Claim}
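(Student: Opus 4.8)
The plan is to detect the surface purely homologically, using that $K(p/q)$ is a rational homology sphere with $2$-torsion in its first homology exactly when $p$ is even. First I would record the homology: since the longitude $\lambda$ is null-homologous in the exterior $E(K)$, Dehn filling along the slope $p/q$ imposes the single relation $p[\mu]=0$ on $H_1(E(K);\mathbb{Z})=\mathbb{Z}\langle\mu\rangle$, so $H_1(K(p/q);\mathbb{Z})\cong\mathbb{Z}/p\mathbb{Z}$. With $p$ even and nonzero (the relevant case; in the application $p=10$), this tells us that $K(p/q)$ is a rational homology $3$-sphere and that $H_1(K(p/q);\mathbb{Z}/2\mathbb{Z})\cong\mathbb{Z}/2\mathbb{Z}\neq 0$.

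Next I would pass to the Poincar\'e dual picture. By Poincar\'e duality with $\mathbb{Z}/2$ coefficients,
\[
H_2(K(p/q);\mathbb{Z}/2\mathbb{Z})\;\cong\;H^1(K(p/q);\mathbb{Z}/2\mathbb{Z})\;\cong\;\mathrm{Hom}\bigl(H_1(K(p/q);\mathbb{Z}),\,\mathbb{Z}/2\mathbb{Z}\bigr)\;\cong\;\mathbb{Z}/2\mathbb{Z},
\]
so there is a nonzero class $\alpha\in H_2(K(p/q);\mathbb{Z}/2\mathbb{Z})$. I would then invoke the standard fact that, in a closed $3$-manifold, every mod $2$ homology class is carried by an embedded closed surface: classify the Poincar\'e dual of $\alpha$ in $H^1(\,\cdot\,;\mathbb{Z}/2\mathbb{Z})$ by a map $f$ to $\mathbb{RP}^{\infty}$, homotope $f$ into a finite skeleton $\mathbb{RP}^{N}$, make it transverse to a hyperplane $\mathbb{RP}^{N-1}$, and set $S=f^{-1}(\mathbb{RP}^{N-1})$; this $S$ is an embedded closed surface with $[S]=\alpha$.

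Finally I would argue that $S$ cannot be orientable. If it were, then the push-forward of an integral fundamental class of $S$ would be a class in $H_2(K(p/q);\mathbb{Z})$ whose mod $2$ reduction is $[S]=\alpha$; but $H_2(K(p/q);\mathbb{Z})=0$ since $K(p/q)$ is a rational homology sphere, which would force $\alpha=0$, a contradiction. Hence $S$ is the desired closed non-orientable surface.

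I expect the only point that needs care to be the realization of $\alpha$ by an embedded surface. This is classical in dimension three, so it can simply be cited; alternatively it can be replaced by an explicit construction. Take a Seifert surface $F$ for $K$, so $\partial F=\lambda$ lies on the peripheral torus $T=\partial V$ of the surgery solid torus $V$. Since $\lambda$ winds $p$ times around the core of $V$, it is null-homologous modulo $2$ in $V$ precisely because $p$ is even, hence bounds a surface $G\subset V$ (which is forced to be non-orientable as $p\neq 0$, and can be assembled from M\"obius bands and meridian disks of $V$ banded together). Then $F\cup_{\lambda}G$ is an embedded closed surface in $K(p/q)$ containing a M\"obius band, hence non-orientable. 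Either route completes the proof.
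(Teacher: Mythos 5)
Your proof is correct (for $p$ even and nonzero, which is all the paper ever uses, namely $p=10$), but it takes a genuinely different route from the paper: the paper gives no argument at all for this claim, simply citing Rubinstein's work on one-sided Heegaard splittings and Bartolini's results on incompressible one-sided surfaces in Dehn fillings, where the non-orientable surface arises by capping off a non-orientable spanning surface of the knot inside the filling solid torus. Your first argument is purely homological and self-contained: $H_1(K(p/q);\mathbb{Z})\cong\mathbb{Z}/p$, so for $p$ even and nonzero one gets a nonzero class in $H_2(K(p/q);\mathbb{Z}/2)$ by Poincar\'e duality, represents it by an embedded closed surface (the standard $\mathbb{RP}^N$-transversality argument), and rules out orientability because $H_2(K(p/q);\mathbb{Z})=0$ in a rational homology sphere; your second, explicit construction (Seifert surface capped by a surface in $V$ bounded by $\lambda$, which winds $p$ times around the core and is therefore mod-$2$ null-homologous but not integrally null-homologous in $V$) is in fact very close in spirit to what Rubinstein--Bartolini actually do, except that they work with non-orientable spanning surfaces and obtain geometrically incompressible one-sided surfaces, which is more than Claim~1 needs. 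Two small points to be aware of: your homological argument genuinely requires $p\neq 0$ (for $p=0$ the mod-$2$ class is carried by the capped Seifert surface and no contradiction arises), so your proof establishes the claim only in the form ``$p$ even, $p\neq 0$''; and in the explicit construction the assertion that a simple closed curve on $\partial V$ that is null-homologous mod $2$ in $V$ bounds a properly embedded (necessarily non-orientable here) surface in $V$ should be flagged as the standard fact it is, since it is the one nontrivial ingredient of that variant. Neither point affects the application to $K(\pm 10/3)$.
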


This immediately follows from the result obtained in \cite{Rubinstein}, 
and actually claimed and used in \cite{Bartolini12}. 
Thus our pair of manifold $K(-10/3)$ and $K(10/3)$ 
both contain closed non-orientable embedded surfaces. 

We now consider the minimal genus of such non-orientable surfaces. 
Here, by the genus of a non-orientable surface $F$, denoted by $g(F)$, 
we mean the number of M\"{o}bius bands 
mutually disjointly embedded in $F$. 
Also $\chi (F)  = 2 - g(F)$ holds for the Euler characteristic $\chi(F)$ for $F$. 

Among our manifold $K(-10/3)$ and $K(10/3)$, 
the following holds for $K(-10/3)$. 

\begin{Claim}
The manifold $K(-10/3)$ contains 
a closed non-orientable surface $\hat{F_1}$ of genus at most $5$. 
\end{Claim}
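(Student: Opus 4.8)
The plan is to build $\hat{F_1}$ by hand, using that $K=9_{27}$ is the two-bridge knot $S(49,19)$. First I would take a continued fraction expansion of $49/19$ — for instance $49/19=[2,2,-4,3]$ — and form the associated linear plumbing of twisted bands; this is a spanning surface $F_1$ of $K$, which I then push into $E(K)$. Since the expansion has length $4$, the surface is built from $4$ bands, so $b_1(F_1)=4$ and $\chi(F_1)=-3$; since one band carries an odd number of half-twists, $F_1$ is non-orientable (it has $4$ crosscaps, and $4$ is in fact the crosscap number of $9_{27}$, as $49/19$ admits no shorter expansion). The point to extract is the boundary slope of $F_1$ on $\partial E(K)$, which for this surface works out to be $-4$; note that, $F_1$ being non-orientable, this slope is automatically a nonzero even integer, and $-4$ is precisely the even integer at distance $2$ from $-10/3$.

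Next I would Dehn-fill $E(K)$ along the slope $-10/3$ with a solid torus $V$ to obtain $K(-10/3)$, with $\partial F_1$ pushed onto $\partial V=\partial E(K)$. Because $\Delta(-4,-10/3)=|(-4)\cdot 3-1\cdot(-10)|=2$, the curve $\partial F_1$ meets each meridian disk of $V$ in two points, so on $\partial V$ it is a $(2,\mathrm{odd})$-curve and hence bounds a M\"{o}bius band $M$ properly embedded in $V$. Set $\hat{F_1}:=F_1\cup_{\partial F_1}M$; since the interiors of $F_1$ and $M$ lie in $E(K)$ and in the interior of $V$ respectively, this is a closed embedded surface in $K(-10/3)$.

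It remains to read off the genus. The surface $\hat{F_1}$ is non-orientable, since it contains the non-orientable $F_1$ (equivalently, it contains the M\"{o}bius band $M$). Gluing a M\"{o}bius band along a boundary circle leaves the Euler characteristic unchanged, so $\chi(\hat{F_1})=\chi(F_1)+\chi(M)=-3+0=-3$, whence $g(\hat{F_1})=2-\chi(\hat{F_1})=5$. In particular $\hat{F_1}$ has genus at most $5$.

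I expect the substantive step to be the boundary-slope computation in the first paragraph: producing a non-orientable spanning surface of $9_{27}$ that has few crosscaps and, simultaneously, has boundary slope exactly $-4$. Making both hold requires choosing the right continued fraction expansion of $49/19$ and carefully tracking the framing that the resulting plumbed surface induces on $K$ (equivalently, its normal Euler number / Gordon--Litherland correction term); if the most obvious expansion does not realise slope $-4$, one would instead invoke the Hatcher--Thurston classification of incompressible surfaces in $E(S(49,19))$ to locate one that does. The capping-off in the solid torus is then routine, so this bookkeeping, not the capping, is where the specific arithmetic of $S(49,19)$ and of the slope $-10/3$ really enters.
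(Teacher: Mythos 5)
Your proposal is essentially the paper's own argument: both produce, via the Hatcher--Thurston continued-fraction machinery, a genus-$4$ non-orientable spanning surface of $9_{27}$ with boundary slope $-4$, observe that $\Delta(-4,-10/3)=2$, and cap off with a M\"obius band in the filling solid torus to obtain a closed non-orientable surface with $\chi=-3$, i.e.\ genus $5$. The one computational input you leave unverified --- that some expansion of $49/19$ actually realizes boundary slope $-4$ with only four crosscaps --- is exactly the fact the paper delegates to Dunfield's implementation of the Hatcher--Thurston algorithm, so you have correctly identified where the real work lies.
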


\begin{proof}
As demonstrated in \cite{HatcherThurston}, 
in terms of the continued fractional expansions 
for the parameter of a given two-bridge knot, 
we have an algorithm to construct 
embedded surfaces in the two-bridge knot exterior. 

By using that, we see that 
there exists a non-orientable spanning surface $F_1$ for $K$ 
of genus 4 with boundary slope $-4$, 
meaning that $F_1$ has a single boundary component 
which represents the slope corresponding to $-4$ on $\partial E(K)$. 
Also, it can be checked by the Dunfield's program \cite{Dunfield}, 
which implements the Hatcher-Thurston's algorithm. 

We here note that 
the distance $\Delta (-4, -10/3)$ between 
the pair of slopes $-4$ and $-10/3$ is calculated as 
$| -4 \cdot 3 - (-10) \cdot 1 | = 2$, 
where the \textit{distance} of slopes is defined as 
the minimal intersection number of their representatives, 
and is calculated by $| ps - qr |$ for the slopes $p/q$ and $r/s$. 
See \cite{Rolfsen} for example. 

Since $\Delta (-4, -10/3) =2$, by adding a M\"{o}bius band, 
equivalently, a converse operation of boundary-compressing, 
we have a non-orientable surface of genus 5 with boundary-slope $-10/3$. 
\end{proof}

\begin{figure}[hbt]
\unitlength 0.1in
\begin{picture}( 30.3000, 18.0000)( 29.9000,-28.0000)
%
\special{pn 8}%
\special{ar 3200 1600 210 600  0.0000000 6.2831853}%
%
\special{pn 8}%
\special{ar 5800 1600 210 600  1.5707963 1.6004260}%
\special{ar 5800 1600 210 600  1.6893148 1.7189445}%
\special{ar 5800 1600 210 600  1.8078334 1.8374630}%
\special{ar 5800 1600 210 600  1.9263519 1.9559815}%
\special{ar 5800 1600 210 600  2.0448704 2.0745000}%
\special{ar 5800 1600 210 600  2.1633889 2.1930185}%
\special{ar 5800 1600 210 600  2.2819074 2.3115371}%
\special{ar 5800 1600 210 600  2.4004260 2.4300556}%
\special{ar 5800 1600 210 600  2.5189445 2.5485741}%
\special{ar 5800 1600 210 600  2.6374630 2.6670926}%
\special{ar 5800 1600 210 600  2.7559815 2.7856111}%
\special{ar 5800 1600 210 600  2.8745000 2.9041297}%
\special{ar 5800 1600 210 600  2.9930185 3.0226482}%
\special{ar 5800 1600 210 600  3.1115371 3.1411667}%
\special{ar 5800 1600 210 600  3.2300556 3.2596852}%
\special{ar 5800 1600 210 600  3.3485741 3.3782037}%
\special{ar 5800 1600 210 600  3.4670926 3.4967223}%
\special{ar 5800 1600 210 600  3.5856111 3.6152408}%
\special{ar 5800 1600 210 600  3.7041297 3.7337593}%
\special{ar 5800 1600 210 600  3.8226482 3.8522778}%
\special{ar 5800 1600 210 600  3.9411667 3.9707963}%
\special{ar 5800 1600 210 600  4.0596852 4.0893148}%
\special{ar 5800 1600 210 600  4.1782037 4.2078334}%
\special{ar 5800 1600 210 600  4.2967223 4.3263519}%
\special{ar 5800 1600 210 600  4.4152408 4.4448704}%
\special{ar 5800 1600 210 600  4.5337593 4.5633889}%
\special{ar 5800 1600 210 600  4.6522778 4.6819074}%
%
\special{pn 8}%
\special{ar 5800 1600 220 600  4.7123890 6.2831853}%
\special{ar 5800 1600 220 600  0.0000000 1.5707963}%
%
\special{pn 8}%
\special{pa 5800 1000}%
\special{pa 3200 1000}%
\special{fp}%
%
\special{pn 13}%
\special{pa 3210 2200}%
\special{pa 3910 2200}%
\special{fp}%
%
\special{pn 13}%
\special{pa 5280 2200}%
\special{pa 5800 2200}%
\special{fp}%
%
\special{pn 13}%
\special{ar 4410 2200 490 1200  5.1191949 5.1333961}%
\special{ar 4410 2200 490 1200  5.1759996 5.1902008}%
\special{ar 4410 2200 490 1200  5.2328044 5.2470056}%
\special{ar 4410 2200 490 1200  5.2896091 5.3038103}%
\special{ar 4410 2200 490 1200  5.3464138 5.3606150}%
\special{ar 4410 2200 490 1200  5.4032186 5.4174198}%
\special{ar 4410 2200 490 1200  5.4600233 5.4742245}%
\special{ar 4410 2200 490 1200  5.5168280 5.5310292}%
\special{ar 4410 2200 490 1200  5.5736328 5.5878340}%
\special{ar 4410 2200 490 1200  5.6304375 5.6446387}%
\special{ar 4410 2200 490 1200  5.6872422 5.7014434}%
\special{ar 4410 2200 490 1200  5.7440470 5.7582482}%
\special{ar 4410 2200 490 1200  5.8008517 5.8150529}%
\special{ar 4410 2200 490 1200  5.8576564 5.8718576}%
\special{ar 4410 2200 490 1200  5.9144612 5.9286624}%
\special{ar 4410 2200 490 1200  5.9712659 5.9854671}%
\special{ar 4410 2200 490 1200  6.0280706 6.0422718}%
\special{ar 4410 2200 490 1200  6.0848754 6.0990766}%
\special{ar 4410 2200 490 1200  6.1416801 6.1558813}%
\special{ar 4410 2200 490 1200  6.1984848 6.2126860}%
\special{ar 4410 2200 490 1200  6.2552896 6.2694908}%
%
\special{pn 13}%
\special{ar 4810 2200 500 1200  3.1415927 4.7123890}%
%
\special{pn 13}%
\special{ar 4800 2200 490 1200  4.7123890 4.7265902}%
\special{ar 4800 2200 490 1200  4.7691937 4.7833949}%
\special{ar 4800 2200 490 1200  4.8259984 4.8401996}%
\special{ar 4800 2200 490 1200  4.8828032 4.8970044}%
\special{ar 4800 2200 490 1200  4.9396079 4.9538091}%
\special{ar 4800 2200 490 1200  4.9964126 5.0106138}%
\special{ar 4800 2200 490 1200  5.0532174 5.0674186}%
\special{ar 4800 2200 490 1200  5.1100221 5.1242233}%
\special{ar 4800 2200 490 1200  5.1668269 5.1810280}%
\special{ar 4800 2200 490 1200  5.2236316 5.2378328}%
\special{ar 4800 2200 490 1200  5.2804363 5.2946375}%
\special{ar 4800 2200 490 1200  5.3372411 5.3514422}%
\special{ar 4800 2200 490 1200  5.3940458 5.4082470}%
\special{ar 4800 2200 490 1200  5.4508505 5.4650517}%
\special{ar 4800 2200 490 1200  5.5076553 5.5218564}%
\special{ar 4800 2200 490 1200  5.5644600 5.5786612}%
\special{ar 4800 2200 490 1200  5.6212647 5.6354659}%
\special{ar 4800 2200 490 1200  5.6780695 5.6922706}%
\special{ar 4800 2200 490 1200  5.7348742 5.7490754}%
\special{ar 4800 2200 490 1200  5.7916789 5.8058801}%
\special{ar 4800 2200 490 1200  5.8484837 5.8626848}%
\special{ar 4800 2200 490 1200  5.9052884 5.9194896}%
\special{ar 4800 2200 490 1200  5.9620931 5.9762943}%
\special{ar 4800 2200 490 1200  6.0188979 6.0330990}%
\special{ar 4800 2200 490 1200  6.0757026 6.0899038}%
\special{ar 4800 2200 490 1200  6.1325073 6.1467085}%
\special{ar 4800 2200 490 1200  6.1893121 6.2035132}%
\special{ar 4800 2200 490 1200  6.2461168 6.2603180}%
%
\special{pn 13}%
\special{ar 4810 2200 500 1200  3.1415927 4.7123890}%
%
\special{pn 13}%
\special{ar 4410 2200 500 1200  3.1415927 4.7123890}%
%
\special{pn 13}%
\special{ar 4410 2200 500 1200  3.1415927 4.7123890}%
%
\special{pn 13}%
\special{pa 4306 2200}%
\special{pa 4896 2200}%
\special{fp}%
%
\special{pn 8}%
\special{pa 3200 2200}%
\special{pa 3200 2800}%
\special{fp}%
\special{pa 3200 2800}%
\special{pa 5800 2800}%
\special{fp}%
\special{pa 5800 2800}%
\special{pa 5800 2200}%
\special{fp}%
%
\special{pn 13}%
\special{pa 3890 2200}%
\special{pa 5290 2200}%
\special{ip}%
%
\special{pn 4}%
\special{pa 4450 1370}%
\special{pa 3950 1870}%
\special{fp}%
\special{pa 4420 1460}%
\special{pa 3940 1940}%
\special{fp}%
\special{pa 4400 1540}%
\special{pa 3930 2010}%
\special{fp}%
\special{pa 4380 1620}%
\special{pa 3930 2070}%
\special{fp}%
\special{pa 4360 1700}%
\special{pa 3930 2130}%
\special{fp}%
\special{pa 4350 1770}%
\special{pa 3940 2180}%
\special{fp}%
\special{pa 4340 1840}%
\special{pa 3990 2190}%
\special{fp}%
\special{pa 4330 1910}%
\special{pa 4050 2190}%
\special{fp}%
\special{pa 4320 1980}%
\special{pa 4110 2190}%
\special{fp}%
\special{pa 4320 2040}%
\special{pa 4170 2190}%
\special{fp}%
\special{pa 4320 2100}%
\special{pa 4230 2190}%
\special{fp}%
\special{pa 4320 2160}%
\special{pa 4290 2190}%
\special{fp}%
\special{pa 4510 1250}%
\special{pa 3960 1800}%
\special{fp}%
\special{pa 4590 1110}%
\special{pa 3970 1730}%
\special{fp}%
\special{pa 4560 1080}%
\special{pa 3980 1660}%
\special{fp}%
\special{pa 4520 1060}%
\special{pa 4000 1580}%
\special{fp}%
\special{pa 4490 1030}%
\special{pa 4020 1500}%
\special{fp}%
\special{pa 4440 1020}%
\special{pa 4050 1410}%
\special{fp}%
\special{pa 4380 1020}%
\special{pa 4100 1300}%
\special{fp}%
%
\special{pn 4}%
\special{pa 4640 1000}%
\special{pa 4570 1070}%
\special{fp}%
\special{pa 4700 1000}%
\special{pa 4610 1090}%
\special{fp}%
\special{pa 4580 1000}%
\special{pa 4540 1040}%
\special{fp}%
%
\special{pn 4}%
\special{pa 4260 1380}%
\special{pa 4230 1350}%
\special{fp}%
\special{pa 4230 1410}%
\special{pa 4200 1380}%
\special{fp}%
\special{pa 4200 1440}%
\special{pa 4170 1410}%
\special{fp}%
\special{pa 4170 1470}%
\special{pa 4140 1440}%
\special{fp}%
\special{pa 4140 1500}%
\special{pa 4110 1470}%
\special{fp}%
\special{pa 4110 1530}%
\special{pa 4080 1500}%
\special{fp}%
\special{pa 4080 1560}%
\special{pa 4050 1530}%
\special{fp}%
\special{pa 4050 1590}%
\special{pa 4020 1560}%
\special{fp}%
\special{pa 4020 1620}%
\special{pa 4000 1600}%
\special{fp}%
\special{pa 4290 1350}%
\special{pa 4260 1320}%
\special{fp}%
\special{pa 4320 1320}%
\special{pa 4290 1290}%
\special{fp}%
\special{pa 4350 1290}%
\special{pa 4320 1260}%
\special{fp}%
\special{pa 4380 1260}%
\special{pa 4350 1230}%
\special{fp}%
\special{pa 4410 1230}%
\special{pa 4380 1200}%
\special{fp}%
\special{pa 4440 1200}%
\special{pa 4410 1170}%
\special{fp}%
\special{pa 4470 1170}%
\special{pa 4440 1140}%
\special{fp}%
\special{pa 4500 1140}%
\special{pa 4470 1110}%
\special{fp}%
\special{pa 4530 1110}%
\special{pa 4500 1080}%
\special{fp}%
%
\special{pn 4}%
\special{pa 4260 1320}%
\special{pa 4230 1290}%
\special{fp}%
\special{pa 4230 1350}%
\special{pa 4200 1320}%
\special{fp}%
\special{pa 4200 1380}%
\special{pa 4170 1350}%
\special{fp}%
\special{pa 4170 1410}%
\special{pa 4140 1380}%
\special{fp}%
\special{pa 4140 1440}%
\special{pa 4110 1410}%
\special{fp}%
\special{pa 4110 1470}%
\special{pa 4080 1440}%
\special{fp}%
\special{pa 4080 1500}%
\special{pa 4050 1470}%
\special{fp}%
\special{pa 4050 1530}%
\special{pa 4020 1500}%
\special{fp}%
\special{pa 4290 1290}%
\special{pa 4260 1260}%
\special{fp}%
\special{pa 4320 1260}%
\special{pa 4290 1230}%
\special{fp}%
\special{pa 4350 1230}%
\special{pa 4320 1200}%
\special{fp}%
\special{pa 4380 1200}%
\special{pa 4350 1170}%
\special{fp}%
\special{pa 4410 1170}%
\special{pa 4380 1140}%
\special{fp}%
\special{pa 4440 1140}%
\special{pa 4410 1110}%
\special{fp}%
\special{pa 4470 1110}%
\special{pa 4440 1080}%
\special{fp}%
\special{pa 4500 1080}%
\special{pa 4470 1050}%
\special{fp}%
%
\special{pn 4}%
\special{pa 4290 1230}%
\special{pa 4260 1200}%
\special{fp}%
\special{pa 4260 1260}%
\special{pa 4230 1230}%
\special{fp}%
\special{pa 4230 1290}%
\special{pa 4200 1260}%
\special{fp}%
\special{pa 4200 1320}%
\special{pa 4170 1290}%
\special{fp}%
\special{pa 4170 1350}%
\special{pa 4140 1320}%
\special{fp}%
\special{pa 4140 1380}%
\special{pa 4110 1350}%
\special{fp}%
\special{pa 4110 1410}%
\special{pa 4080 1380}%
\special{fp}%
\special{pa 4080 1440}%
\special{pa 4050 1410}%
\special{fp}%
\special{pa 4320 1200}%
\special{pa 4290 1170}%
\special{fp}%
\special{pa 4350 1170}%
\special{pa 4320 1140}%
\special{fp}%
\special{pa 4380 1140}%
\special{pa 4350 1110}%
\special{fp}%
\special{pa 4410 1110}%
\special{pa 4380 1080}%
\special{fp}%
\special{pa 4440 1080}%
\special{pa 4410 1050}%
\special{fp}%
\special{pa 4470 1050}%
\special{pa 4440 1020}%
\special{fp}%
%
\special{pn 4}%
\special{pa 4260 1200}%
\special{pa 4230 1170}%
\special{fp}%
\special{pa 4230 1230}%
\special{pa 4200 1200}%
\special{fp}%
\special{pa 4200 1260}%
\special{pa 4170 1230}%
\special{fp}%
\special{pa 4170 1290}%
\special{pa 4140 1260}%
\special{fp}%
\special{pa 4140 1320}%
\special{pa 4110 1290}%
\special{fp}%
\special{pa 4110 1350}%
\special{pa 4090 1330}%
\special{fp}%
\special{pa 4290 1170}%
\special{pa 4260 1140}%
\special{fp}%
\special{pa 4320 1140}%
\special{pa 4290 1110}%
\special{fp}%
\special{pa 4350 1110}%
\special{pa 4320 1080}%
\special{fp}%
\special{pa 4380 1080}%
\special{pa 4350 1050}%
\special{fp}%
\special{pa 4410 1050}%
\special{pa 4380 1020}%
\special{fp}%
%
\special{pn 4}%
\special{pa 4260 1140}%
\special{pa 4230 1110}%
\special{fp}%
\special{pa 4230 1170}%
\special{pa 4200 1140}%
\special{fp}%
\special{pa 4200 1200}%
\special{pa 4180 1180}%
\special{fp}%
\special{pa 4170 1230}%
\special{pa 4150 1210}%
\special{fp}%
\special{pa 4290 1110}%
\special{pa 4260 1080}%
\special{fp}%
\special{pa 4320 1080}%
\special{pa 4300 1060}%
\special{fp}%
%
\special{pn 4}%
\special{pa 4200 1500}%
\special{pa 4170 1470}%
\special{fp}%
\special{pa 4170 1530}%
\special{pa 4140 1500}%
\special{fp}%
\special{pa 4140 1560}%
\special{pa 4110 1530}%
\special{fp}%
\special{pa 4110 1590}%
\special{pa 4080 1560}%
\special{fp}%
\special{pa 4080 1620}%
\special{pa 4050 1590}%
\special{fp}%
\special{pa 4050 1650}%
\special{pa 4020 1620}%
\special{fp}%
\special{pa 4020 1680}%
\special{pa 3990 1650}%
\special{fp}%
\special{pa 3990 1710}%
\special{pa 3970 1690}%
\special{fp}%
\special{pa 4230 1470}%
\special{pa 4200 1440}%
\special{fp}%
\special{pa 4260 1440}%
\special{pa 4230 1410}%
\special{fp}%
\special{pa 4290 1410}%
\special{pa 4260 1380}%
\special{fp}%
\special{pa 4320 1380}%
\special{pa 4290 1350}%
\special{fp}%
\special{pa 4350 1350}%
\special{pa 4320 1320}%
\special{fp}%
\special{pa 4380 1320}%
\special{pa 4350 1290}%
\special{fp}%
\special{pa 4410 1290}%
\special{pa 4380 1260}%
\special{fp}%
\special{pa 4440 1260}%
\special{pa 4410 1230}%
\special{fp}%
\special{pa 4470 1230}%
\special{pa 4440 1200}%
\special{fp}%
\special{pa 4500 1200}%
\special{pa 4470 1170}%
\special{fp}%
\special{pa 4530 1170}%
\special{pa 4500 1140}%
\special{fp}%
\special{pa 4560 1140}%
\special{pa 4530 1110}%
\special{fp}%
%
\special{pn 4}%
\special{pa 4170 1590}%
\special{pa 4140 1560}%
\special{fp}%
\special{pa 4140 1620}%
\special{pa 4110 1590}%
\special{fp}%
\special{pa 4110 1650}%
\special{pa 4080 1620}%
\special{fp}%
\special{pa 4080 1680}%
\special{pa 4050 1650}%
\special{fp}%
\special{pa 4050 1710}%
\special{pa 4020 1680}%
\special{fp}%
\special{pa 4020 1740}%
\special{pa 3990 1710}%
\special{fp}%
\special{pa 3990 1770}%
\special{pa 3960 1740}%
\special{fp}%
\special{pa 4200 1560}%
\special{pa 4170 1530}%
\special{fp}%
\special{pa 4230 1530}%
\special{pa 4200 1500}%
\special{fp}%
\special{pa 4260 1500}%
\special{pa 4230 1470}%
\special{fp}%
\special{pa 4290 1470}%
\special{pa 4260 1440}%
\special{fp}%
\special{pa 4320 1440}%
\special{pa 4290 1410}%
\special{fp}%
\special{pa 4350 1410}%
\special{pa 4320 1380}%
\special{fp}%
\special{pa 4380 1380}%
\special{pa 4350 1350}%
\special{fp}%
\special{pa 4410 1350}%
\special{pa 4380 1320}%
\special{fp}%
\special{pa 4440 1320}%
\special{pa 4410 1290}%
\special{fp}%
\special{pa 4470 1290}%
\special{pa 4440 1260}%
\special{fp}%
\special{pa 4500 1260}%
\special{pa 4470 1230}%
\special{fp}%
\special{pa 4520 1220}%
\special{pa 4500 1200}%
\special{fp}%
\special{pa 4550 1190}%
\special{pa 4530 1170}%
\special{fp}%
%
\special{pn 4}%
\special{pa 4080 1740}%
\special{pa 4050 1710}%
\special{fp}%
\special{pa 4050 1770}%
\special{pa 4020 1740}%
\special{fp}%
\special{pa 4020 1800}%
\special{pa 3990 1770}%
\special{fp}%
\special{pa 3990 1830}%
\special{pa 3960 1800}%
\special{fp}%
\special{pa 4110 1710}%
\special{pa 4080 1680}%
\special{fp}%
\special{pa 4140 1680}%
\special{pa 4110 1650}%
\special{fp}%
\special{pa 4170 1650}%
\special{pa 4140 1620}%
\special{fp}%
\special{pa 4200 1620}%
\special{pa 4170 1590}%
\special{fp}%
\special{pa 4230 1590}%
\special{pa 4200 1560}%
\special{fp}%
\special{pa 4260 1560}%
\special{pa 4230 1530}%
\special{fp}%
\special{pa 4290 1530}%
\special{pa 4260 1500}%
\special{fp}%
\special{pa 4320 1500}%
\special{pa 4290 1470}%
\special{fp}%
\special{pa 4350 1470}%
\special{pa 4320 1440}%
\special{fp}%
\special{pa 4380 1440}%
\special{pa 4350 1410}%
\special{fp}%
\special{pa 4410 1410}%
\special{pa 4380 1380}%
\special{fp}%
\special{pa 4440 1380}%
\special{pa 4410 1350}%
\special{fp}%
\special{pa 4460 1340}%
\special{pa 4440 1320}%
\special{fp}%
%
\special{pn 4}%
\special{pa 4200 1680}%
\special{pa 4170 1650}%
\special{fp}%
\special{pa 4170 1710}%
\special{pa 4140 1680}%
\special{fp}%
\special{pa 4140 1740}%
\special{pa 4110 1710}%
\special{fp}%
\special{pa 4110 1770}%
\special{pa 4080 1740}%
\special{fp}%
\special{pa 4080 1800}%
\special{pa 4050 1770}%
\special{fp}%
\special{pa 4050 1830}%
\special{pa 4020 1800}%
\special{fp}%
\special{pa 4020 1860}%
\special{pa 3990 1830}%
\special{fp}%
\special{pa 3990 1890}%
\special{pa 3960 1860}%
\special{fp}%
\special{pa 3960 1920}%
\special{pa 3940 1900}%
\special{fp}%
\special{pa 4230 1650}%
\special{pa 4200 1620}%
\special{fp}%
\special{pa 4260 1620}%
\special{pa 4230 1590}%
\special{fp}%
\special{pa 4290 1590}%
\special{pa 4260 1560}%
\special{fp}%
\special{pa 4320 1560}%
\special{pa 4290 1530}%
\special{fp}%
\special{pa 4350 1530}%
\special{pa 4320 1500}%
\special{fp}%
\special{pa 4380 1500}%
\special{pa 4350 1470}%
\special{fp}%
\special{pa 4410 1470}%
\special{pa 4380 1440}%
\special{fp}%
\special{pa 4430 1430}%
\special{pa 4410 1410}%
\special{fp}%
%
\special{pn 4}%
\special{pa 4200 1740}%
\special{pa 4170 1710}%
\special{fp}%
\special{pa 4170 1770}%
\special{pa 4140 1740}%
\special{fp}%
\special{pa 4140 1800}%
\special{pa 4110 1770}%
\special{fp}%
\special{pa 4110 1830}%
\special{pa 4080 1800}%
\special{fp}%
\special{pa 4080 1860}%
\special{pa 4050 1830}%
\special{fp}%
\special{pa 4050 1890}%
\special{pa 4020 1860}%
\special{fp}%
\special{pa 4020 1920}%
\special{pa 3990 1890}%
\special{fp}%
\special{pa 3990 1950}%
\special{pa 3960 1920}%
\special{fp}%
\special{pa 3960 1980}%
\special{pa 3940 1960}%
\special{fp}%
\special{pa 4230 1710}%
\special{pa 4200 1680}%
\special{fp}%
\special{pa 4260 1680}%
\special{pa 4230 1650}%
\special{fp}%
\special{pa 4290 1650}%
\special{pa 4260 1620}%
\special{fp}%
\special{pa 4320 1620}%
\special{pa 4290 1590}%
\special{fp}%
\special{pa 4350 1590}%
\special{pa 4320 1560}%
\special{fp}%
\special{pa 4380 1560}%
\special{pa 4350 1530}%
\special{fp}%
\special{pa 4400 1520}%
\special{pa 4380 1500}%
\special{fp}%
%
\special{pn 4}%
\special{pa 4110 1890}%
\special{pa 4080 1860}%
\special{fp}%
\special{pa 4080 1920}%
\special{pa 4050 1890}%
\special{fp}%
\special{pa 4050 1950}%
\special{pa 4020 1920}%
\special{fp}%
\special{pa 4020 1980}%
\special{pa 3990 1950}%
\special{fp}%
\special{pa 3990 2010}%
\special{pa 3960 1980}%
\special{fp}%
\special{pa 3960 2040}%
\special{pa 3930 2010}%
\special{fp}%
\special{pa 4140 1860}%
\special{pa 4110 1830}%
\special{fp}%
\special{pa 4170 1830}%
\special{pa 4140 1800}%
\special{fp}%
\special{pa 4200 1800}%
\special{pa 4170 1770}%
\special{fp}%
\special{pa 4230 1770}%
\special{pa 4200 1740}%
\special{fp}%
\special{pa 4260 1740}%
\special{pa 4230 1710}%
\special{fp}%
\special{pa 4290 1710}%
\special{pa 4260 1680}%
\special{fp}%
\special{pa 4320 1680}%
\special{pa 4290 1650}%
\special{fp}%
\special{pa 4350 1650}%
\special{pa 4320 1620}%
\special{fp}%
\special{pa 4380 1620}%
\special{pa 4350 1590}%
\special{fp}%
%
\special{pn 4}%
\special{pa 4200 1860}%
\special{pa 4170 1830}%
\special{fp}%
\special{pa 4170 1890}%
\special{pa 4140 1860}%
\special{fp}%
\special{pa 4140 1920}%
\special{pa 4110 1890}%
\special{fp}%
\special{pa 4110 1950}%
\special{pa 4080 1920}%
\special{fp}%
\special{pa 4080 1980}%
\special{pa 4050 1950}%
\special{fp}%
\special{pa 4050 2010}%
\special{pa 4020 1980}%
\special{fp}%
\special{pa 4020 2040}%
\special{pa 3990 2010}%
\special{fp}%
\special{pa 3990 2070}%
\special{pa 3960 2040}%
\special{fp}%
\special{pa 3960 2100}%
\special{pa 3930 2070}%
\special{fp}%
\special{pa 4230 1830}%
\special{pa 4200 1800}%
\special{fp}%
\special{pa 4260 1800}%
\special{pa 4230 1770}%
\special{fp}%
\special{pa 4290 1770}%
\special{pa 4260 1740}%
\special{fp}%
\special{pa 4320 1740}%
\special{pa 4290 1710}%
\special{fp}%
\special{pa 4350 1710}%
\special{pa 4320 1680}%
\special{fp}%
\special{pa 4370 1670}%
\special{pa 4350 1650}%
\special{fp}%
%
\special{pn 4}%
\special{pa 4140 1980}%
\special{pa 4110 1950}%
\special{fp}%
\special{pa 4110 2010}%
\special{pa 4080 1980}%
\special{fp}%
\special{pa 4080 2040}%
\special{pa 4050 2010}%
\special{fp}%
\special{pa 4050 2070}%
\special{pa 4020 2040}%
\special{fp}%
\special{pa 4020 2100}%
\special{pa 3990 2070}%
\special{fp}%
\special{pa 3990 2130}%
\special{pa 3960 2100}%
\special{fp}%
\special{pa 3960 2160}%
\special{pa 3930 2130}%
\special{fp}%
\special{pa 4170 1950}%
\special{pa 4140 1920}%
\special{fp}%
\special{pa 4200 1920}%
\special{pa 4170 1890}%
\special{fp}%
\special{pa 4230 1890}%
\special{pa 4200 1860}%
\special{fp}%
\special{pa 4260 1860}%
\special{pa 4230 1830}%
\special{fp}%
\special{pa 4290 1830}%
\special{pa 4260 1800}%
\special{fp}%
\special{pa 4320 1800}%
\special{pa 4290 1770}%
\special{fp}%
\special{pa 4350 1770}%
\special{pa 4320 1740}%
\special{fp}%
%
\special{pn 4}%
\special{pa 4200 1980}%
\special{pa 4170 1950}%
\special{fp}%
\special{pa 4170 2010}%
\special{pa 4140 1980}%
\special{fp}%
\special{pa 4140 2040}%
\special{pa 4110 2010}%
\special{fp}%
\special{pa 4110 2070}%
\special{pa 4080 2040}%
\special{fp}%
\special{pa 4080 2100}%
\special{pa 4050 2070}%
\special{fp}%
\special{pa 4050 2130}%
\special{pa 4020 2100}%
\special{fp}%
\special{pa 4020 2160}%
\special{pa 3990 2130}%
\special{fp}%
\special{pa 3990 2190}%
\special{pa 3960 2160}%
\special{fp}%
\special{pa 4230 1950}%
\special{pa 4200 1920}%
\special{fp}%
\special{pa 4260 1920}%
\special{pa 4230 1890}%
\special{fp}%
\special{pa 4290 1890}%
\special{pa 4260 1860}%
\special{fp}%
\special{pa 4320 1860}%
\special{pa 4290 1830}%
\special{fp}%
\special{pa 4340 1820}%
\special{pa 4320 1800}%
\special{fp}%
%
\special{pn 4}%
\special{pa 4230 2010}%
\special{pa 4200 1980}%
\special{fp}%
\special{pa 4200 2040}%
\special{pa 4170 2010}%
\special{fp}%
\special{pa 4170 2070}%
\special{pa 4140 2040}%
\special{fp}%
\special{pa 4140 2100}%
\special{pa 4110 2070}%
\special{fp}%
\special{pa 4110 2130}%
\special{pa 4080 2100}%
\special{fp}%
\special{pa 4080 2160}%
\special{pa 4050 2130}%
\special{fp}%
\special{pa 4050 2190}%
\special{pa 4020 2160}%
\special{fp}%
\special{pa 4260 1980}%
\special{pa 4230 1950}%
\special{fp}%
\special{pa 4290 1950}%
\special{pa 4260 1920}%
\special{fp}%
\special{pa 4320 1920}%
\special{pa 4290 1890}%
\special{fp}%
%
\special{pn 4}%
\special{pa 4260 2040}%
\special{pa 4230 2010}%
\special{fp}%
\special{pa 4230 2070}%
\special{pa 4200 2040}%
\special{fp}%
\special{pa 4200 2100}%
\special{pa 4170 2070}%
\special{fp}%
\special{pa 4170 2130}%
\special{pa 4140 2100}%
\special{fp}%
\special{pa 4140 2160}%
\special{pa 4110 2130}%
\special{fp}%
\special{pa 4110 2190}%
\special{pa 4080 2160}%
\special{fp}%
\special{pa 4290 2010}%
\special{pa 4260 1980}%
\special{fp}%
\special{pa 4320 1980}%
\special{pa 4290 1950}%
\special{fp}%
%
\special{pn 4}%
\special{pa 4260 2100}%
\special{pa 4230 2070}%
\special{fp}%
\special{pa 4230 2130}%
\special{pa 4200 2100}%
\special{fp}%
\special{pa 4200 2160}%
\special{pa 4170 2130}%
\special{fp}%
\special{pa 4170 2190}%
\special{pa 4140 2160}%
\special{fp}%
\special{pa 4290 2070}%
\special{pa 4260 2040}%
\special{fp}%
\special{pa 4320 2040}%
\special{pa 4290 2010}%
\special{fp}%
%
\special{pn 4}%
\special{pa 4290 2130}%
\special{pa 4260 2100}%
\special{fp}%
\special{pa 4260 2160}%
\special{pa 4230 2130}%
\special{fp}%
\special{pa 4230 2190}%
\special{pa 4200 2160}%
\special{fp}%
\special{pa 4320 2100}%
\special{pa 4290 2070}%
\special{fp}%
%
\special{pn 4}%
\special{pa 4290 2190}%
\special{pa 4260 2160}%
\special{fp}%
\special{pa 4320 2160}%
\special{pa 4290 2130}%
\special{fp}%
%
\special{pn 4}%
\special{pa 4590 1050}%
\special{pa 4560 1020}%
\special{fp}%
%
\special{pn 4}%
\special{pa 4620 1080}%
\special{pa 4590 1050}%
\special{fp}%
\special{pa 4650 1050}%
\special{pa 4620 1020}%
\special{fp}%
%
\special{pn 4}%
\special{pa 5280 1980}%
\special{pa 4820 1520}%
\special{fp}%
\special{pa 5270 1910}%
\special{pa 4790 1430}%
\special{fp}%
\special{pa 5260 1840}%
\special{pa 4750 1330}%
\special{fp}%
\special{pa 5250 1770}%
\special{pa 4690 1210}%
\special{fp}%
\special{pa 5240 1700}%
\special{pa 4640 1100}%
\special{fp}%
\special{pa 5220 1620}%
\special{pa 4670 1070}%
\special{fp}%
\special{pa 5200 1540}%
\special{pa 4710 1050}%
\special{fp}%
\special{pa 5170 1450}%
\special{pa 4750 1030}%
\special{fp}%
\special{pa 5140 1360}%
\special{pa 4800 1020}%
\special{fp}%
\special{pa 5080 1240}%
\special{pa 4870 1030}%
\special{fp}%
\special{pa 5280 2040}%
\special{pa 4840 1600}%
\special{fp}%
\special{pa 5280 2100}%
\special{pa 4850 1670}%
\special{fp}%
\special{pa 5280 2160}%
\special{pa 4870 1750}%
\special{fp}%
\special{pa 5250 2190}%
\special{pa 4880 1820}%
\special{fp}%
\special{pa 5190 2190}%
\special{pa 4890 1890}%
\special{fp}%
\special{pa 5130 2190}%
\special{pa 4900 1960}%
\special{fp}%
\special{pa 5070 2190}%
\special{pa 4900 2020}%
\special{fp}%
\special{pa 5010 2190}%
\special{pa 4900 2080}%
\special{fp}%
\special{pa 4950 2190}%
\special{pa 4900 2140}%
\special{fp}%
%
\special{pn 4}%
\special{pa 4550 1030}%
\special{pa 4520 1060}%
\special{fp}%
\special{pa 4520 1000}%
\special{pa 4490 1030}%
\special{fp}%
\special{pa 4580 1060}%
\special{pa 4560 1080}%
\special{fp}%
%
\special{pn 4}%
\special{pa 4560 1080}%
\special{pa 4530 1050}%
\special{fp}%
%
\special{pn 4}%
\special{pa 4530 1050}%
\special{pa 4500 1020}%
\special{fp}%
\end{picture}%
\caption{M\"{o}bius band attaching}
\end{figure}
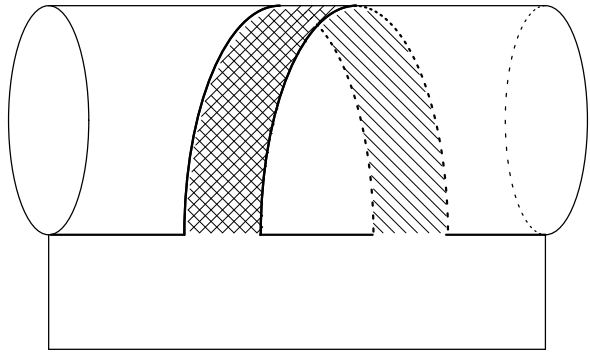

On the other hand, the following holds for $K(10/3)$. 

\begin{Claim}
The manifold $K(10/3)$ does not contain 
closed non-orientable surfaces of genus at most $5$. 
\end{Claim}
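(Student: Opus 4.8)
The plan is to argue by contradiction. Suppose $\hat S\subset K(10/3)$ is a closed non-orientable surface with $g(\hat S)\le 5$, and choose $\hat S$ so that $g(\hat S)$ is minimal among all such surfaces and, subject to that, so that $\hat S$ meets a meridian disk of the surgery solid torus $V$ in as few points as possible. Since $S^3$ contains no closed non-orientable surface, $\hat S$ cannot be isotoped off $V$, so $F:=\hat S\cap E(K)$ is a properly embedded surface in the two-bridge knot exterior $E(K)$ whose boundary consists of $n\ge 1$ parallel copies of the slope $10/3$, with $\chi(F)=\chi(\hat S)=2-g(\hat S)\ge -3$. Standard innermost-disk arguments, using the minimality of $g(\hat S)$ and of $n$ together with the irreducibility of $K(10/3)$ (an essential sphere would produce an essential surface in $E(K)$ with boundary slope $10/3$, impossible since boundary slopes of two-bridge knots are even integers), show that $F$ may be taken incompressible in $E(K)$; the one subtlety is that compressing $\hat S$ could a priori yield an orientable surface, but that happens only if the compressing curve was inessential on $\hat S$, so it is not a genuine compression.

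The heart of the argument is to confront $F$ with the Hatcher--Thurston classification of essential surfaces in two-bridge knot exteriors. Because the boundary slope $10/3$ of $F$ is not an even integer, $F$ is not $\partial$-incompressible, so we $\partial$-compress $F$ repeatedly until every component of the resulting surface $\Sigma$ is either essential or trivial (a disk, a sphere, or a $\partial$-parallel annulus), discarding the trivial pieces. Each $\partial$-compression raises Euler characteristic by $1$, so $\chi(\Sigma)=\chi(F)+k$ if $k$ $\partial$-compressions are used; dually, $F$ is reconstructed from $\Sigma$ by attaching $k$ bands along $\partial E(K)$, some carrying a half-twist to produce the crosscaps of $\hat S$, together with some tubes in the interior. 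The key estimate is that a properly embedded surface all of whose boundary curves have an even integer slope $s$ can be turned by band attachments into one whose boundary curves all have slope $10/3$ only if the number of bands is at least $\tfrac12\Delta(s,10/3)$; tracking Euler characteristics and boundary slopes through this reconstruction yields, in the main case where the surviving essential piece $\Sigma_0$ has a single boundary curve of slope $s(\Sigma_0)$, an inequality of the form
\[
g(\hat S)\ \ge\ g(\Sigma_0)\ +\ \tfrac12\,\Delta\bigl(s(\Sigma_0),\,10/3\bigr).
\]
The construction in the proof of Claim~2 is exactly the reverse inequality, realised with $\Sigma_0=F_1$, $s(\Sigma_0)=-4$, $g(F_1)=4$, and $\tfrac12\,\Delta(-4,-10/3)=1$, giving the genus $5$ surface there.

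It then remains to carry out a finite computation: using the Hatcher--Thurston algorithm for $K=S(49,19)$, as implemented in Dunfield's program, one lists the boundary slopes $s$ of $K$ and, for each, the minimal genus of an essential surface with boundary slope $s$, and checks that
\[
\min_{s}\Bigl[\,g_{\min}(s)+\tfrac12\,\Delta(s,10/3)\,\Bigr]\ \ge\ 6.
\]
Here the non-amphichirality of $9_{27}$ is what breaks the symmetry: for $-10/3$ the even integer $-4$ lies at distance $2$ and carries a genus $4$ spanning surface, giving the value $5$ and hence the surface of Claim~2, whereas no boundary slope of $K$ is close enough to $10/3$ while carrying an essential surface simple enough to bring the above minimum below $6$. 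I expect the main obstacle to be making the band-counting inequality rigorous in full generality — in particular handling the cases where a $\partial$-compression disconnects $F$ or amalgamates several boundary curves, verifying that the surviving essential piece is genuinely one of the Hatcher--Thurston surfaces with no spurious extra boundary, and pinning down the exact constants relating $g(\hat S)$, the complexity of $\Sigma_0$, and $\Delta(s(\Sigma_0),10/3)$ — after which Claim~3 reduces to the finite slope-by-slope verification above.
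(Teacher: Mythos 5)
Your overall strategy --- reduce to the Hatcher--Thurston/Dunfield list of essential surfaces in $E(K)$ and then do a finite slope-by-slope check against $10/3$ --- is the right one, and your opening reductions (the surface must meet $V$, $F$ may be taken incompressible, $F$ cannot be $\partial$-incompressible because $10/3$ is not an even integer) are all sound in spirit. But the proof as proposed has a genuine gap exactly where you flag it: the band-counting inequality $g(\hat S)\ge g(\Sigma_0)+\tfrac12\Delta(s(\Sigma_0),10/3)$ is the entire content of the claim (granting it, the rest is a table lookup), and it is neither a standard lemma nor obviously true in the generality you need. When $F$ has several boundary components, $\partial$-compressions can merge or split boundary curves and disconnect the surface, and the relation between the number of bands, the change in Euler characteristic, and the change of slope is not linear in $\Delta$; the one clean instance (one band realizes distance $2$) does not iterate to ``$k$ bands realize distance $2k$''. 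In addition, after compressing and $\partial$-compressing, the surviving essential piece $\Sigma_0$ may be orientable, so your minimum would have to range over \emph{all} Hatcher--Thurston surfaces with a $\chi$-based complexity rather than the non-orientable genus, which changes the arithmetic of the final check. Finally, the assertion that compressing $\hat S$ along an essential curve cannot produce an orientable surface is false (a Klein bottle compressed along an essential two-sided curve can yield a sphere); what is actually needed is the standard fact that a minimal-genus one-sided surface in a fixed $\mathbb{Z}_2$-homology class is geometrically incompressible.

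The paper avoids all of this by citing Przytycki's Proposition 3.3: the closed surface can be isotoped so that $F_2=\hat F_2\cap E(K)$ is already incompressible \emph{and} $\partial$-incompressible (hence literally one of the eight surfaces in the Dunfield list, of genus $4$ or $5$ with known even-integer boundary slopes), while $\hat F_2\cap V$ is incompressible in the solid torus $V$, hence by the genus count either a disk (forcing boundary slope $10/3$, impossible) or a single M\"obius band. For the M\"obius band the distance statement is elementary --- its boundary meets the meridian of $V$ exactly twice, i.e.\ $\Delta(r_2,10/3)=2$ --- and this contradicts $r_2\in\{-8,-4,0\}$. So the only ``distance'' fact ever needed is for one M\"obius band in one solid torus, not for an arbitrary sequence of band attachments. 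To repair your argument you would either have to prove your band-counting inequality in full generality (a nontrivial project) or, more efficiently, replace the $\partial$-compression bookkeeping by Przytycki's positioning result as the paper does.
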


\begin{proof}
We suppose that $K(10/3)$ contains 
a closed non-orientable surface $\hat{F_2}$ of genus at most 5, 
and will find a contradiction. 
After compressions, if necessary, 
we can assume that $\hat{F_2}$ is incompressible. 

Then, as shown by Przytycki in \cite[Proposition 3.3]{Przytycki}, 
$\hat{F_2}$ can be isotoped so that 
$F_2 = \hat{F_2} \cap E(K)$ is incompressible, boundary-incompressible, 
and not boundary-parallel properly embedded in $E(K)$, 
and $\hat{F_2} \cap V$ is incompressible in the attached solid torus $V$. 

For the candidates of $\hat{F_2}$, 
by using the Dunfield's program, 
we can verify that 
there are exactly 8 such surfaces in $E(K)$, 
and their genera are at least $4$. 
Now, since we are assuming $\hat{F_2}$ of genus at most 5, 
it implies that the genus $g(F_2)$ of $F_2$ must be either $4$ or $5$.  

Consider the case where $g(F_2) = 5$. 
In this case, the Dunfield's program tells us that 
their boundary-slopes are either $-2$, $2$, $6$, or $10$. 
However, since $g(\hat{F_2})=g(F_2)=5$, 
it follows that $\hat{F_2} - F_2$ must be a disk,  
which implies the boundary-slope of $F_2$ must be $10/3$. 
A contradiction occurs. 
 
Consider the case where $g(F_2) = 4$. 
In this case, the Dunfield's program tells us that 
their boundary-slopes are either $-8$, $-4$, or $0$. 
Now, since $g(\hat{F_2})=5$ and $g(F_2)=4$, 
$\hat{F_2} \cap V$ gives a M\"{o}bius band $M$ 
properly embedded in the attached solid torus $V$. 

Also as shown in \cite{Przytycki}, 
this $M$ must be boundary compressible in $V$. 
Then, by single boundary-compression on $M$ in $V$, 
we have 
a non-orientable incompressible, boundary-compressible surface $F_2'$ 
properly embedded in $E(K)$ with boundary-slope $10/3$. 
This implies that $\Delta ( r_2 , 10/3 )=2$ must hold 
for the boundary slope $r_2$ of $F_2$. 

However, since $r_2$ must be either $-8$, $-4$, or $0$, 
it follows that $\Delta ( r_2 , 10/3 ) \ne 2$. 
Again a contradiction occurs. 
\end{proof}

These claims show that 
the pair of manifolds $K(-10/3)$ and $K(10/3)$ 
are not homeomorphic.

\bibliographystyle{amsplain}

\end{document}